\documentclass[11pt, a4paper, fleqn]{amsart}
\usepackage[latin1,utf8]{inputenc}
\usepackage{amsmath}
\usepackage{amsthm}
\usepackage{amsfonts}
\usepackage{amssymb}
\usepackage{amscd,amssymb,amsopn,amsmath,amsthm,graphics,amsfonts,enumerate,verbatim,calc}
\usepackage[T1]{fontenc}
\usepackage[all]{xy}
\usepackage{bbm}
\usepackage{enumitem}
\usepackage{color}
\usepackage[hidelinks]{hyperref}
\usepackage{graphicx}
\usepackage{transparent}
\usepackage{graphics}
\usepackage{xcolor}
\usepackage{color}
\usepackage{geometry}
\usepackage{marginnote}
%\usepackage{normem}{ulem}

%The following lines makes it possible to doctor the margins
\setlength{\textwidth}{156mm}
%The default \textheight is (close to) 206mm.
 \setlength{\textheight}{224mm}
% \addtolength{\oddsidemargin}{-0.5cm}
 %\addtolength{\evensidemargin}{-0.5cm}
 %\addtolength{\topmargin}{-3mm}

\usepackage{pgf,tikz}
\usepackage{mathrsfs}
\usetikzlibrary{arrows,decorations.pathmorphing,backgrounds,positioning,fit,petri,cd}
\usepackage{float}

\theoremstyle{plain}
\newtheorem{theorem}{Theorem}
\newtheorem*{theorem*}{Theorem}
\newtheorem{proposition}[theorem]{Proposition}

\theoremstyle{remark}
\newtheorem{remark}[theorem]{Remark}

\theoremstyle{definition}

\newtheorem*{definition*}{Definition}

\newtheorem{defth}[theorem]{Definition/Theorem}

\def\Hom{\mbox{Hom}}
\def\Ext{\mbox{Ext}}
\def\End{\mbox{End}}

\def\add{\mbox{add}\,}
\def\mod{\mbox{mod}\,}

\newcommand{\M}{\mathscr{M}}

\newcommand{\rk}{\mathsf{rk}}

%%%%%%%%%%%%%%%%%%%%%%%%%%%%%%%%%%%%%%%%%%%%%%%%

%%%%%%%%%%%%%%%%%%%%%%%%%%%%%%%%%%%%%%%%%%%%%%%%

\begin{document}

\title{The size of a stratifying system can be arbitrarily large}

\author{Hipolito Treffinger}
\address{[\textbf{HT}] Institut de Math\'ematiques Jussieu - Paris Rive Gauge, Universit\'e Paris Cit\'e. Paris, France.}
\email{treffinger@imj-prg.fr}

%    \thanks will become a 1st page footnote.
\thanks{The author is supported by the European Union’s Horizon 2020 research and innovation programme under the Marie Sklodowska-Curie grant agreement No 893654. He is also partially funded by the Deutsche Forschungsgemeinschaft (DFG, German Research Foundation) under Germany's Excellence Strategy Programme -- EXC-2047/1 -- 390685813.}
\dedicatory{In honour of Ibrahim Assem on the occasion of his retirement.}

\maketitle

\begin{abstract}
In this short note we construct two families of examples of large stratifying systems in module categories of algebras.
The first examples consists on stratifying systems of infinite size in the module category of an algebra $A$.
In the second family of examples we show that the size of a finite stratifying system in the module category of a finite dimensional algebra $A$ can be arbitrarily large in comparison to the number of isomorphism classes of simple $A$-modules.
We note that both families of examples are built using well-established results in higher homological algebra.
\end{abstract}

\section{Introduction}
In this paper, $A$ is a basic finite-dimensional algebra over an algebraically closed field $K$, $\mod A$ is the category of finitely presented (right) $A$-modules and $K_0(A)$ denotes the Grothendieck group of $A$.

The notion of exceptional sequences originated in algebraic geometry \cite{Bondal, Goro, GoroRuda, RudakovExceptional} and was later introduced to representation theory in \cite{CB, R}, becoming an important subject of study in both disciplines. 

On the representation theory side, the definition of exceptional sequence can be stated in the module category of any finite-dimensional algebra.
Despite this, most of the articles on the subject studied exceptional sequences in the module category of hereditary algebras.
Outside the hereditary case, the notion that have been mostly studied is the more general notion of stratifying systems, firstly introduced in \cite{ES2003} (see also \cite{MMS-SSS, ExtPSS-MMS}).
If the algebra is hereditary, it has been proven in \cite{Cadavid1} that every stratifying system is an exceptional sequence. 
However, this is not true in general. 
For instance, the stratifying systems that we build in this note are not exceptional sequences (see Remarks~\ref{rmk:1} and \ref{rmk:2}).
We now recall the definition of stratifying systems and exceptional sequences. 

\begin{definition*}\label{def:stratsys}
Let $A$ be an algebra.
A \textit{stratifying system} of size $ t \in \mathbb{N} \cup \{\infty\}$ in the category $\mod A$ of finitely generated $A$-modules is a pair $(\Theta, \leq)$ where \mbox{$\Theta := \{ \theta_i: i \in [1,t],  i \neq \infty \}$} is a family of indecomposable objects in $\mod A$ and $\leq$ is a linear order on the set $[1,t]:=\{1, \dots , t\}$ such that $\Hom_A(\theta_j, \theta_i)=0$ if $i < j$ and $\Ext^1_A(\theta_j, \theta_i)=0$ if $i \leq j$.
Moreover, we say that a stratifying system $( \Theta, \leq )$ is an \textit{exceptional sequence} if \mbox{$\End_A(\theta_i) \cong K$} for all $i \in [1,t]$ and $\Ext^n_A(\theta_j, \theta_i) = 0$ for all $i \leq j$ and $n \in \mathbb{N}$.
\end{definition*}

There are numerous works studying the consequences of the existence of a stratifying system in the module category of an algebra, see for instance \cite{Cadavid2, Cadavid1, ES2003, Erdmann2005, MMS-SSS, ExtPSS-MMS}.
However, the existence of stratifying systems in module categories is a problem that has received less attention.
To our knowledge, the only works addressing the existence of exceptional sequences or stratifying systems outside the hereditary case are \cite{Meltzer1995} for canonical algebras,  \cite{HP} for the Auslander algebra of $K[x]/x^t$, \cite{Persson2020} for quotients of type $\mathbb{A}$ zig-zag algebras and \cite{Buan2021, MendozaTreffinger} for arbitrary algebras using techniques from $\tau$-tilting theory.

The classical examples of stratifying systems are the so-called \textit{canonical} stratifying systems.
These are stratifying systems which are constructed using all indecomposable projective modules and, as a consequence, the size $t$ of every canonical stratifying system coincides with $\rk(K_0(A))$. 
Also, it was shown in \cite{Cadavid1} that the size of every stratifying system in the module category of a hereditary algebra $H$ is bounded by $\rk(K_0(H))$.

This is not true for stratifying systems in the module category of an arbitrary algebra since there are examples of stratifying systems whose size is bigger than the rank of the Grothendieck group of the algebra, see for instance \cite[3.2]{ES2003} and \cite[Remark~2.7]{MMS-SSS}. 
However, it was conjectured the existence of an upper-bound for the size of a stratifying system in the module category $A$ which was a function of $\rk(K_0(A))$.

In Section~\ref{sec:infinite}, we show that this conjecture is false by proving the existence of a family of algebras having a stratifying system of infinite size.
Later, in Section~\ref{sec:verylarge}, we construct a family of finite stratifying systems whose size cannot be linearly bounded by the rank of the Grothendieck group of their ambient module category.

\section{stratifying systems of infinite size}\label{sec:infinite}
The notion of $d$-representation infinite algebras was introduced and first studied by Herschend, Iyama and Oppermann in \cite{HIO2014}. 
In this section we use a particular family of $d$-representation infinite algebras, the so-called \textit{Bellinson algebras} \cite[Example~2.15]{HIO2014}, to construct examples of a stratifying systems of infinite size.
We note that the choice of this particular family is made to give an explicit example.
However the same construction of stratifying system of infinite size can be performed in any $d$-representation infinite algebra.

Fix a positive integer $d>1$. 
Then the \textit{Beilinson algebra} $\mathscr{B}_d$ is the path algebra of the quiver 
$$ \xymatrix{
1  \ar@/^0.7pc/[rr]^{a^1_0}_{\scalebox{0.6}{\vdots}}\ar@/_0.7pc/[rr]_{a^1_{d}} & & 2 \ar@/^0.7pc/[rr]^{a^2_0}_{\scalebox{0.6}{\vdots}}\ar@/_0.7pc/[rr]_{a^2_{d}} & & 3} 
\quad\cdots\quad\xymatrix{
d \ar@/^0.7pc/[rr]^{a^{d}_0}_{\scalebox{0.6}{\vdots}}\ar@/_0.7pc/[rr]_{a^{d}_{d}}& & d+1}
$$
modulo the ideal of relations generated by the elements of the form $a_i^{k}a_j^{k+1}-a_j^ka_i^{k+1}$.
Also, we recall that the $d$-Auslander-Reiten translations in $\mod \mathscr{B}_d$ are defined as 
\[ \tau_d (-) := H^0(\nu_d (-) ): \mod \mathscr{B}_d \to \mod \mathscr{B}_d\]
\[ \tau^{-}_d (-) := H^0(\nu^-_d (-) ): \mod \mathscr{B}_d \to \mod \mathscr{B}_d\]
where $\nu_d$ is the autoequivalence of $D^d(\mod \mathscr{B}_d)$ defined as the composition $\nu_d := \nu \circ [-d]$ of the Nakayama functor $\nu : D^d(\mod \mathscr{B}_d) \to D^d(\mod \mathscr{B}_d)$ with the inverse $[-d]$ of the $d$-th suspension functor $[d]: D^d(\mod \mathscr{B}_d) \to D^d(\mod \mathscr{B}_d)$.

It is clear that as a module over itself, $\mathscr{B}_d \cong \bigoplus_{i=1}^{d+1} P(i)$, where every $P(i)$ is indecomposable projective in $\mod \mathscr{B}_d$ and every indecomposable projective $P$ is isomorphic to $P(i)$ for some $1 \leq i \leq d+1$. 
Moreover, since $\mathscr{B}_d$ is basic, we have that $P(i)$ is not isomorphic to $P(j)$ if $i$ is different from $j$.
Likewise, every indecomposable injective is isomorphic to one of the modules $I(i)$, where $D \mathscr{B}_d \cong \bigoplus_{i=1}^{d+1} I(i)$.
Following this notation we define $P_d(i):= \{ \tau^{-j}_d P(i) : j \in \mathbb{N} \}$ and $I_d(i) := \{\tau_d^j I(i) : j \in \mathbb{N}\}$ for every $1 \leq i \leq d+1$.
We note that the natural order $\leq$ in $\mathbb{N}$ induces a natural order in $P_d(i)$ and $I_d(i)$ for every $1\leq i\leq d+1$.

\begin{theorem}
Let $d$ be a positive integer greater than two and let $\leq$ be the natural order in $\mathbb{N}$. 
With the notation above, $(P_d(i), \leq)$ is a stratifying system in $\mod \mathscr{B}_d$ of infinite size for all $1 \leq i \leq d+1$. 
Similarly, $(I_d(i), \leq^{op})$ is a stratifying system $\mod \mathscr{B}_d$ of infinite size for all $1 \leq i \leq d+1$.
\end{theorem}

\begin{proof}
We only prove the case of $(P_d(i), \leq)$ for a given $1 \leq i \leq d+1$, since the other cases are similar. 
First, \cite[Proposition~4.10.(a)]{HIO2014} states that $\tau_d^{-j}P(i)$ and $\tau_d^{-l}P(i)$ are not isomorphic if $j \neq l$.
Hence $P_d(i)$ has infinitely many objects. 

It follows from Propositions 2.3 and 2.9 in \cite{HIO2014} that $\Hom_{\mathscr{B}_d}(\tau_d^{-j} P(i), \tau_d^{-l} P(i))=0$ if $l<j$.
Also, \cite[Proposition~4.10.(f)]{HIO2014} implies that $\Ext^1_{\mathscr{B}_d}(\tau_d^{-j} P(i), \tau_d^{-l} P(i))=0$ for all $j,l \in \mathbb{N}$.
In particular $\Ext^1_{\mathscr{B}_d}(\tau_d^{-j} P(i), \tau_d^{-l} P(i))=0$ if $l \leq j$.
Then $(P_d(i), \leq)$ is a stratifying system of infinite size.
\end{proof}

\begin{remark}\label{rmk:1}
We note that the stratifying system $(P_d(i), \leq)$ is not an exceptional sequence, since $\Ext^d_{\mathscr{B}_d}( \tau_d^{-1} M, M)\neq 0$ for all $M \in P_d(i)$.
Likewise, $(I_d(i), \leq^{op})$ is not an exceptional sequence since $\Ext^d_{\mathscr{B}_d}(M, \tau_d M)\neq 0$ for all $M \in I_d$.
\end{remark}

\section{Stratifying systems for higher Auslander algebras}\label{sec:verylarge}

In the previous section we show the existence of stratifying systems of infinite size.
In this section we show the existence of a family of algebras having finite stratifying systems whose size grows quicker than the rank of the Grothendieck group of the algebras.

\begin{theorem}\label{thm:big}
For every positive integer $m$, there exists an algebra $A_m$ and a stratifying system $(\Theta_m, \leq)$ of size $s_m$ in $\mod A_m$ such that $s_m > m. \rk(K_0(A_m))$.
\end{theorem}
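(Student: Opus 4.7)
The plan is to take $A_m$ to be a higher Auslander algebra $A^{(d)}_n$ of type $\mathbb{A}$, in the sense of Iyama's higher Auslander correspondence (see \cite{HIO2014}), for a fixed $d\ge 2$ and $n$ chosen as a function of $m$. By construction, $A^{(d)}_n$ is $d$-representation finite and admits a unique $d$-cluster tilting subcategory $\mathscr{M}\subset\mod A^{(d)}_n$, and one has
\[
\rk K_0(A^{(d)}_n) = \binom{n+d-1}{d},
\qquad
|\ind\mathscr{M}| = \binom{n+d}{d+1}.
\]
Hence the ratio $|\ind\mathscr{M}|/\rk K_0(A^{(d)}_n)$ equals $(n+d)/(d+1)$, which is unbounded in $n$; fixing $d=2$ and choosing $n > 3m-2$ therefore gives the desired numerical comparison.

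I would then take $\Theta_m$ to be a complete set of representatives of the isomorphism classes of indecomposable objects of $\mathscr{M}$. The $\Ext^1$-vanishing required of a stratifying system is in fact automatic: by the definition of a $d$-cluster tilting subcategory one has $\Ext^k_{A_m}(X,Y)=0$ for all $X,Y\in\mathscr{M}$ and all $0<k<d$, so as soon as $d\ge 2$ the condition $\Ext^1_{A_m}(\theta_j,\theta_i)=0$ holds for every pair $i,j$, in particular whenever $i\le j$.

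For the Hom-vanishing I would linearly order $\ind\mathscr{M}$ using the $d$-Auslander--Reiten translation $\tau_d$, in direct analogy with Section~\ref{sec:infinite}: since $A^{(d)}_n$ is $d$-representation finite, iteratively applying $\tau_d^-$ to the indecomposable projectives exhausts $\ind\mathscr{M}$, and one places $\tau_d^-X$ strictly above $X$ within each $\tau_d$-orbit, breaking ties between orbits via a fixed linear order on the indecomposable projectives (for instance the one induced by the vertex order of $A^{(d)}_n$). The analogue of \cite[Theorem~4.25]{HIO2014} in the $d$-representation finite setting, combined with the fact that $\Hom_{A_m}(\tau_d^-X,X)=0$ along each $\tau_d$-orbit, then yields $\Hom_{A_m}(\theta_j,\theta_i)=0$ whenever $i<j$.

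The principal obstacle is the same as the one underlying Section~\ref{sec:infinite}: namely, promoting pairwise Hom-vanishing between distinct indecomposables of $\mathscr{M}$ into a single coherent linear order on all of $\ind\mathscr{M}$. In the present $d$-representation finite setting this is amenable either to the explicit combinatorial parametrization of $\ind\mathscr{M}$ by $(d+1)$-element subsets of $\{0,1,\dots,n+d\}$ from \cite{HIO2014}, under which Hom-vanishing is governed by a concrete lexicographic comparison of the indexing subsets, or to an induction on $d$ through the iterated Auslander construction itself. Either route, combined with the count in the first paragraph, produces a stratifying system $(\Theta_m,\leq)$ of size $t_m=\binom{n+d}{d+1}>m\cdot\rk K_0(A_m)$, as required.
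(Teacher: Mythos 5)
Your choice of algebra, of cluster-tilting module, and your numerical bookkeeping all coincide with the paper's: for $d=2$ the algebra $A^{(2)}_n$ is the Auslander algebra of linearly oriented $\mathbb{A}_n$, the counts $\binom{n+1}{2}=n(n+1)/2$ and $\binom{n+2}{3}=n(n+1)(n+2)/6$ are exactly those in the paper, and the observation that $\Ext^1$-vanishing is automatic from the definition of a $2$-cluster-tilting subcategory is the same. However, the step you explicitly defer as the ``principal obstacle'' is the one that carries the actual content: producing a single linear order on the indecomposable summands of $M$ for which the Hom-vanishing holds. Your proposed order ($\tau_d^{-}$-slices of the projectives, with ties broken by an order on the projectives) is not verified; in particular, within a fixed slice you would need $\Hom_{A_m}(\tau_d^{-k}P_a,\tau_d^{-k}P_b)=0$ for one coherent order on the projectives and all $k$ simultaneously, and neither of the two routes you gesture at (the combinatorial parametrization by subsets, or induction on $d$) is carried out. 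As written, the argument is therefore incomplete at precisely the point where completeness matters.

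The gap is real but easy to close, and the paper closes it with a single citation: by \cite[Theorem~6.12]{Iyama2011a} the endomorphism algebra $\End_{A_m}(M)$ is again a higher Auslander algebra of type $\mathbb{A}$, and in particular its Gabriel quiver is \emph{acyclic}. A topological sort of that quiver then gives a total order on $M_1,\dots,M_{t_n}$ such that every nonzero non-invertible morphism $M_i\to M_j$, being a linear combination of composites of arrows, forces $i<j$; this is exactly the condition $\Hom_{A_m}(M_j,M_i)=0$ for $i<j$. Replacing your orbit-based order by this one makes the rest of your argument go through verbatim and brings it into agreement with the paper's proof.
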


Before proving our theorem, we need to recall some notions and results of higher homological algebra that will be used in our proof. 
A subcategory $\M$ of $\mod A$ is said to be \emph{$d$-cluster tilting} if 
$$\M  = \{X \in \mod A : \Ext_A^i(X,M)=0 \text{ for all $M \in \M$ and $1\leq i \leq d-1$}\}$$
$$ \ \ \ \ \  = \{Y \in \mod A : \Ext_A^i(M,Y)=0 \text{ for all $M \in \M$ and $1\leq i \leq d-1$}\}.$$
An $A$-module $M$ is said to be $d$-cluster tilting if $\add M$ is a $d$-cluster tilting subcategory of $\mod A$.

In general, given an algebra $A$ it is very difficult to know if $A$ has a $d$-cluster tilting subcategory (see \cite{Vaso21} and the references therein).
However, we are interested in the so-called \textit{higher Auslander algebras} introduced in \cite{Iyama2011a}.
These algebras are defined inductively as follows. 

\begin{defth}\cite{Iyama2011a}\label{thm:Iyama}
Let $n$ be a positive integer and $d$ be a non-negative integer. 
The $d$-Aulander algebra $\mathbb{A}_n^d$ is the path algebra of the linearly oriented quiver of type $\mathbb{A}_n$ if $d=0$, or is the endomorphism algebra $\End_{\mathbb{A}_n^{d-1}}(M)$ of the $d$-cluster tilting module $M$ of $\mathbb{A}_n^{d-1}$ for all $d>0$. 
\end{defth}

In fact, there are several combinatorial descriptions of the quiver and relations of $\mathbb{A}_n^d$ \cite{Iyama2011a, OT2012, JKPK, HJ21}. 
In this paper we follow the notation appearing in \cite{OT2012}.

\begin{proposition}\cite[Theorem 3.6]{OT2012}\label{prop:quiver}
Let $\mathbb{A}_n^d$ be the $d$-Auslander algebra of $\mathbb{A}_n$.
Then there is a bijection between the vertices of the quiver of $\mathbb{A}_n^d$ with the set 
$$V_{n,d}= \left\{(x_0, x_1, \dots, x_d) : 1\leq x_0 < x_1 < \dots < x_d \leq n+d  \right\}.$$
Moreover, given two elements $\underline{x}, \underline{y} \in V_{n,d}$ there is an arrow $\underline{x} \to \underline{y}$ if there exists $0 \leq k \leq d$ such that $y_k=x_k+1$ and $y_i=x_i$ for all $i \neq k$.
\end{proposition}

\begin{remark}\label{rmk:number}
Note that it follows from the previous proposition that the number of vertices of the quiver of $\mathbb{A}_n^d$ is equal to the number of integer lattice points inside the canonical $d$-simplex in $\mathbb{R}^{d+1}$ generated by the interval $[0,n]$.
\end{remark}

\begin{proof}[Proof of Theorem~\ref{thm:big}.]
Let $\mathbb{A}_n^1$ be the ($1$-)Auslander algebra of $\mathbb{A}_n$. 
It follows from Theorem~\ref{thm:Iyama} that $\mathbb{A}_n^1$ has a $2$-cluster tilting module $M= \bigoplus_{i=1}^{t_n} M_i$ such that $\End_{\mathbb{A}_n^1}M = \mathbb{A}_n^2$. 
We claim that there exists  of an order $\leq$ in the set $[1,t_n]:=\{1, \dots, t_n\}$ such that $(\Theta_n, \leq):=(\{M_i : i \in [1,t_n]\}, \leq)$ is a stratifying system.

We first note that it follows from Proposition~\ref{prop:quiver} that the quiver of $\mathbb{A}_n^2$ is an acyclic quiver.
As a consequence, there exists a total order $\leq$ in $[1,t_n]$ such that $\Hom_{\mathbb{A}_n^1}(M_i, M_j)=0$ if $j < i$.
We also have that $\Ext^1_{\mathbb{A}_n^1}(M_i, M_j)=0$ for all $i, j \in [1,t_n]$ because $M$ is a $2$-cluster tilting object.
In particular, $\Ext^1_{\mathbb{A}_n^1}(M_i, M_j)=0$ if $i \leq j$.
Then $(\Theta_n, \leq)$ is a stratifying system.

Now, it is easy to see that $\rk(K_0(\mathbb{A}_n^1)) = \frac{n(n+1)}{2}$.
Moreover the size $t_n$ of $(\Theta_n, \leq)$ is equal to the tetrahedral number $t_n=\frac{n(n+1)(n+2)}{6}$, see Remark~\ref{rmk:number}.
Hence the ratio $R(n)$ between the size of the stratifying system $(\Theta_n, \leq)$ and $\rk(K_0(\mathcal{A}_{n}))$ is 
$$	R(n)	=	 \frac{n(n+1)(n+2)/6}{n(n+1)/2}	 =	\frac{n+2}{3}.$$

In particular, for each $m\in \mathbb{N}$, if we fix $n=3m-1$ we have that size $t_{3m-1}$ of the stratifying system \mbox{$(\Theta_{3m-1}, \leq)$}  in $\mathbb{A}_{3m-1}^1$ is greater than $m.\rk(K_0(\mathbb{A}_{3m-1}^1))$. 
Indeed, 
$$t_{3m-1}= \frac{3m-1+2}{3}. \rk(K_0(\mathbb{A}_{3m-1}^1))= \left(m + \frac{1}{3} \right)  .\rk(K_0(\mathbb{A}_{3m-1}^1)) > m.\rk(K_0(\mathbb{A}_{3m-1}^1)).$$
The result follows by taking $A_m = \mathbb{A}_{3m-1}^1$ and $s_m=t_{3m-1}$.
\end{proof}

\begin{remark}
In the previous proof we fixed $d=1$ to construct our example, but it is easy to see that a similar argument is valid for every $d$.
Indeed, by Propositon~\ref{prop:quiver} we have that the quiver of $\mathbb{A}_{n}^{d}$ is acyclic for every $d$.
Hence we can construct a stratifying system in $\mathbb{A}_{n}^{d}$ for all $n$ and $d$ by means of its $(d+1)$-cluster tilting module.

Moreover, the number of vertices of the quiver of $\mathbb{A}_{n}^{d}$ grows as $n^d$ as $n$ goes to infinity. 
Since there is a stratifying system in $\mod \mathbb{A}_{n}^{d}$ of size $K_0(\mathbb{A}_{n}^{d+1})$, we conclude that for every $m\in \mathbb{N}$ there are infinitely many algebras $\mathcal{A}_m$ with a stratifying system $(\Theta_m, \leq)$ such that the size of $\Theta_m$ is greater than $m.\rk(K_0(\mathcal{A}_m))$.
\end{remark}

\begin{remark}\label{rmk:2}
We note that $(\Theta_m, \leq)$ is not an exceptional sequence since we can find objects $M_i, M_j \in \Theta_m$ such that $\Ext^2_{A_m}(M_i, M_j) \neq 0$ for all $m \in \mathbb{N}$.
\end{remark}

\begin{remark}
A stratifying system $(\Theta, \leq)$ is said to be complete if there is no indecomposable module $N$ such that $(\Theta \cup \{N\}, \leq')$ is a stratifying system such that $i\leq' j$ if $i \leq j$.
It follows from the construction of $(\Theta_m, \leq)$ that there is no indecomposable $A_m$-module $M_0$ such that $(\{M_0\}\cup\Theta_m, \leq)$, where $\leq$ is the natural order in $[0,t_m]$.
Likewise, there is no indecomposable $M_{t_m+1}$ such that $(\Theta_m \cup \{M_{t_m+1}\}, \leq)$ is a stratifying system where $\leq$ is the natural order in $[1, t_m+1]$.
However, is not clear that $(\Theta_m, \leq)$ is a complete stratifying system in general.
\end{remark}

\noindent
{\bf Acknowledgements:} The author would like to thank Octavio Mendoza, Corina Saenz and Aran Tattar for their comments and remarks. 
A special thanks goes to the anonymous referee for their careful reading and helpful suggestions.
He is specially grateful of Gustavo Jasso for pointing out that the arguments of Section~\ref{sec:verylarge} could be used to construct a stratifying system of infinite size, leading to the inclusion of Section~\ref{sec:infinite}.

\def\cprime{$'$} \def\cprime{$'$}

\end{document}